\date{}
\title{Unimodular random one-ended planar graphs are sofic}
\author{\'Ad\'am Tim\'ar}
\renewcommand\footnotemark{}
\newif\ifhyper\IfFileExists{hyperref.sty}{\hypertrue}{\hyperfalse}
\ifhyper\usepackage{hyperref}\fi
\theoremstyle{definition}
\newtheorem{theorem}{Theorem}
\newtheorem{corollary}[theorem]{Corollary}
\newtheorem{lemma}[theorem]{Lemma}
\newtheorem{remark}[theorem]{Remark}
\newtheorem{proposition}[theorem]{Proposition}
\newtheorem{question}[theorem]{Question}
\newcommand{\R}{\mathbb{R}}
\def \P {{\Bbb P}}
\def \_reg {\rightarrow_{\bf reg}}
\def\maxdeg/{\Delta}
\def \P {{\bf P}}
\def \_reg {\rightarrow_{\bf reg}}
\def\maxdeg/{\Delta}
\def\G{{\cal G}}
\def\calG{{\cal G}}
\begin{document}
\maketitle
\let\thefootnote\relax\footnotetext{\footnotesize{Partially supported by the ERC Consolidator Grant 772466 ``NOISE''.}}

\begin{abstract}
We prove that if a unimodular random graph is almost surely planar and has finite expected degree, then it has a combinatorial embedding into the plane which is also unimodular. This implies the claim in the title immediately 
by a theorem of Angel, Hutchcroft, Nachmias and Ray \cite{AHNR}.
Our unimodular embedding also implies that all the dichotomy results of \cite{AHNR} about unimodular maps extend in the one-ended case to unimodular random planar graphs.
\end{abstract}

\section{Introduction, definitions}
\subsection{Results and motivation}
By a {\it unimodular random planar graph (URPG)} we mean a unimodular random graph that is almost surely planar. Such a graph is called sofic if it has a local weak approximation by a sequence of finite graphs. See Subsection \ref{definitions} for the precise definitions.  
We will prove the following.
\begin{theorem}\label{sofic}
Every unimodular random one-ended planar graph $G$ is sofic.
\end{theorem}

A subgraph $H$ of a rooted graph $(G,o)$ is a unimodular subgraph if their joint distribution ($G$ with indicator marks for the subgraph) is unimodular. This also implies that $(H_o,o)$ is unimodular, where $H_o$ is the component of $o$ in $H$. The local weak limit of convergent unimodular random graphs is unimodular, and by restricting $G$ to its subgraph induced by vertices of degree at most $k$, we get a sequence $G_k$ that converges to $G$, so it is enough to prove the theorem for graphs of bounded degree.
Unimodular trees are sofic, as proved by Elek \cite{E1} relying on a method by Bowen \cite{B}. An alternative proof was given in \cite{BLS}. This implies, as shown by Elek and Lippner in \cite{EL}, that the existence of a unimodular spanning tree is sufficient for soficity. Hence Theorem \ref{sofic} follows from the next theorem. 

\begin{theorem}\label{main}
Every unimodular random one-ended planar graph $G$ of finite expected degree contains a 
unimodular spanning tree. 
\end{theorem}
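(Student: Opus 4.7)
The plan is to reduce the problem to the one-ended case via the unimodular decomposition stated in the abstract, construct a unimodular spanning tree inside each piece, and glue the pieces using the tree-like adjacency of blocks.

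Apply the decomposition first: $G$ splits into (random) blocks, each either finite or one-ended, joined by finite cut-sets of edges in a tree-like fashion. In each finite block fix an invariant spanning tree, for instance the minimum spanning tree under a fresh i.i.d.\ uniform labelling of the edges of $G$. The same labelling also selects one edge of each finite cut-set (e.g.\ the lex-least one) to serve as a gluing edge later on. What remains is to produce a unimodular spanning tree inside each one-ended block.

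For a one-ended block $H$, use the unimodular combinatorial embedding of $H$ in the plane proved earlier in the paper, which realises $H$ as a unimodular one-ended planar map. On such a map one can construct an invariant spanning tree by an invariant exhaustion: take an invariant increasing sequence of finite simply connected submaps $H_1 \subset H_2 \subset \dots$ whose union is $H$, pick a spanning tree $T_n$ of each $H_n$ by a compatible invariant rule (for instance minimum weight under i.i.d.\ uniform labels frozen across $n$), and take the monotone limit $T_H$. Simple connectedness of each $H_n$ together with the compatibility of the $T_n$ forces the limit to be a single spanning tree of $H$ rather than a proper forest.

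Gluing the $T_H$ along the chosen cut-edges yields a subgraph $T$ of $G$ which is a spanning tree: no cycle can form, since any cycle would either lie inside a single block (contradicting acyclicity of $T_H$) or traverse at least two distinct cut-sets (impossible because the block-adjacency graph is itself a tree and only one cut-edge was selected per cut-set), and every vertex is reached because each $T_H$ spans its block and consecutive blocks are joined by chosen cut-edges. Unimodularity of $(G,T)$ follows from the joint unimodularity of the decomposition, the block-wise spanning tree constructions, and the cut-edge selections, each of which is a measurable function of $G$ and of independent i.i.d.\ labellings. The main obstacle is the one-ended step: carrying out the invariant exhaustion-and-limit so that the output is actually a single tree rather than a forest, and verifying unimodularity of the limit. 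This is a planar-map argument adapted to the unimodular setting, with the mass transport principle used to rule out residual disconnections and to propagate unimodularity through the limit.
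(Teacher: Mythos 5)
Your reduction and gluing scheme is essentially the paper's: decompose $G$ as in Theorem \ref{decompose} into finite or one-ended pieces joined tree-like by finite cut-sets, span each piece, and connect adjacent pieces by one chosen cut-edge. That part is fine. The genuine gap is in the one-ended step, which is exactly where all the difficulty of the theorem is concentrated. Two things go wrong. First, you posit an \emph{invariant} increasing exhaustion of a one-ended block $H$ by finite submaps $H_1\subset H_2\subset\dots$. Such an equivariant exhaustion by finite sets does not exist in general: it would amount to invariant amenability (hyperfiniteness) of $H$, and one-ended unimodular planar graphs can be nonamenable (e.g.\ hyperbolic tilings). Second, even granting some exhaustion, the monotone limit of minimal spanning trees under frozen i.i.d.\ labels is the free minimal spanning forest, and its connectivity is \emph{not} a consequence of the finite approximants being trees; on nonamenable graphs such limits are typically forests with infinitely many components (this is the familiar FUSF/WUSF and FMSF/WMSF dichotomy). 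Your appeal to ``mass transport to rule out residual disconnections'' is precisely the nontrivial claim that needs proof, not a routine verification.

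The paper closes this gap by importing a theorem of Angel, Hutchcroft, Nachmias and Ray: on a \emph{simply connected} unimodular random planar map of finite expected degree, the Free Uniform Spanning Forest is almost surely a single spanning tree (Theorem \ref{oneended}). To apply it, the paper first builds a unimodular combinatorial embedding of the planar block (Theorem \ref{embedding}) and then shows that for a one-ended block any such embedding is automatically simply connected (Proposition \ref{simplyconnected}). You correctly invoke the unimodular embedding, but you then need to cite (or reprove) the FUSF connectivity result rather than substitute the exhaustion-and-limit argument; as written, that step would fail.
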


Given a graph $H$, an {\it end} of $H$ is an equivalence class of infinite non self-intersecting paths, where two such paths are equivalent if there is a third one that intersects both of them infinitely many times. 
A unimodular graph has either 0 (the finite case), 1, 2 or infinitely many ends, \cite{AL}.

Our main contribution is the following theorem. We provide the definitions after the theorem.

\begin{theorem}\label{embedding}
Let $G$ be a unimodular random planar graph of finite expected degree. Then $G$ has a unimodular combinatorial embedding into the plane. 
\end{theorem}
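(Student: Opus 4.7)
The plan is to decompose $G$ into pieces whose planar embeddings are essentially canonical up to finitely many discrete choices, make independent random choices at each piece, and glue the local embeddings. The guiding principle is that if the decomposition is a canonical (and therefore unimodular) function of $G$, and the additional marks attached to pieces are chosen i.i.d., then the resulting decorated graph is unimodular by a standard mass transport argument.

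First I would pass to the block-cutvertex decomposition of $G$, which is canonical in $G$ and hence unimodular. A combinatorial embedding of $G$ is determined by an embedding of each block together with, at each cut vertex $v$, a cyclic interleaving of the rotations coming from the blocks through $v$. Choosing such interleavings uniformly at random and independently across cut vertices preserves unimodularity, so it suffices to construct a unimodular embedding of each 2-connected block almost surely.

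For each 2-connected block $B$, I would apply an SPQR-style decomposition: a canonical tree whose nodes are 3-connected graphs, cycles, or bundles of parallel edges, glued along virtual edges at 2-separations. This is classical for finite 2-connected graphs; in the infinite case an analogous canonical decomposition based on iteratively splitting along 2-separators can be built. The decomposition is a canonical function of $B$, and hence unimodular. By Whitney's theorem each 3-connected node has a unique planar embedding up to a single global reflection, which I would pick by an independent fair coin. For each cycle or parallel-edge node, the embedding amounts to a cyclic ordering of a (possibly countable) set, again chosen by an independent uniformly random order. Gluing these local embeddings along virtual edges yields a rotation system on $B$, which together with the interleaving choices at cut vertices gives a rotation system $\sigma$ on all of $G$.

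Finally I would verify unimodularity via mass transport: all additional random marks are i.i.d.\ conditional on the canonical decomposition, so $(G,\sigma)$ is a unimodular decorated graph. The main obstacle is making the SPQR-type decomposition work in the infinite setting: both the individual pieces and the decomposition tree may be infinite, and one must check that gluing local planar embeddings across an infinite decomposition tree yields a rotation system that genuinely embeds in the plane, rather than on some surface of higher genus. One must also ensure all constructions are Borel measurable and depend only on the rooted-isomorphism class of $G$, so that the random marks indeed define a unimodular distribution.
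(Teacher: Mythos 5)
Your proposal is correct and follows essentially the same route as the paper: reduce to 2-connected blocks via the block-cutvertex structure, decompose each block into a canonical tree of 3-connected graphs, cycles, and multilinks, embed each piece using Whitney--Imrich uniqueness or independent uniform choices, and glue along virtual edges. The ``SPQR-style decomposition in the infinite setting'' that you flag as the main obstacle is precisely the 3-block tree theorem of Droms, Servatius and Servatius that the paper invokes, and the only other ingredient you treat lightly is the argument (the paper's Lemma \ref{3conn}) that the unique embedding of an infinite 3-connected block is locally determined and hence unimodular.
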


A {\it planar map} is defined as a proper embedding of a locally finite planar graph $G$ into an open subset $U$ of the sphere, up to orientation-preserving homeomorphisms, and with the property that every compact set of $U$ is intersected by finitely many embedded edges, and every face (component of $U$ minus the embedded graph) is homeomorphic to an open disk. We call a planar map simply connected if the union of the closure of the faces of the map is homeomorphic to either the entire sphere, or the sphere minus one point. If (after applying the homeomorphism, if necessary) one projects this embedding on the plane stereographically from this exceptional point (or an arbitrary point outside of the embedded graph, if there is no exceptional point), one gets a planar embedding with the property that every compact set is intersected by finitely many edges. We will also apply the term {\it simply connected} to an embedding of a graph, if the planar map defined by it is simply connected.

There is a combinatorial definition of embeddings: a {\it combinatorial embedding} is a collection of cyclic permutations $\pi_v$ on the set of edges incident to $v$, as $v\in V(G)$, and we call this combinatorial embedding planar if there is an embedding of $G$ into the sphere where the clockwise cyclic order of the edges around $v$ is $\pi_v$. It is clear that every embedding defines a combinatorial embedding (if we take the cyclic permutation of edges by reading them clockwise around the embedded vertex), and conversely, any combinatorial embedding can be generated by some actual embedding, by definition.
Using these permutations, one can give a combinatorial definition of {\it faces}: walk along edges, and when walking along $e$ and reaching endpoint $v$, continue along $\pi_v(e)$. See e.g. \cite{N} for a more precise definition. If a combinatorial embedding is generated by a planar map, the faces of the map are in natural bijection with the combinatorial faces. Conversely, a planar combinatorial embedding defines a unique planar map, which can be obtained if for every combinatorial 
face we take a disk, and glue its boundary along the face.
Given a combinatorial embedding, defined by permutations $\pi_v$ as $v\in V(G)$, we call this combinatorial embedding unimodular if the $\pi_v$ as markings on the vertices define a unimodular marked graph, and if this holds, we also call the resulting rooted planar map unimodular.

In \cite{AHNR} Angel, Hutchcroft, Nachmias and Ray prove that every simply connected unimodular random rooted planar map is sofic. 
A unimodular random map that represents a one-ended graph is automatically simply connected (Proposition \ref{simplyconnected}), hence Theorem \ref{oneended} applies to it. 

In this paper the only type of embeddings that we consider are combinatorial embeddings, and specifically, unimodular ones. About actual embeddings that are unimodular or invariant with regard to the automorphisms of $G$ or the underlying space, we refer the reader to joint works of the author with Benjamini \cite{BT} and with T\'oth \cite{TT}.

The next theorem is essentially Theorem 5.13 and Theorem 2 in \cite{AHNR}. 

\begin{theorem}{(Angel, Hutchcroft, Nachmias, Ray \cite{AHNR})}\label{oneended}
If $G$ has finite expected degree and it can be represented by a simply connected unimodular random rooted planar map, then the Free Uniform Spanning Forest (FUSF) of $G$ is a unimodular spanning tree almost surely. Consequently, $G$ is sofic.
\end{theorem}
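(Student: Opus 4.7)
The plan is to follow the two-step strategy of Angel, Hutchcroft, Nachmias and Ray: first verify that the FUSF of $G$ is a unimodular spanning forest, then verify that it is almost surely connected under the simply connected hypothesis, and finally invoke Elek's theorem on soficity of unimodular spanning trees.

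For unimodularity of the FUSF, the argument is a soft limit argument that does not use planarity. The FUSF is defined as the weak limit, along any exhaustion of $G$ by finite induced subgraphs with free boundary conditions, of the uniform spanning tree on the finite piece. A uniform spanning tree on a finite graph is trivially invariant under re-rooting at a uniformly chosen vertex, i.e.\ unimodular, and the class of unimodular rooted marked graphs is closed under local weak limits. So the FUSF, together with $(G,o)$, is a unimodular random rooted graph with a distinguished spanning subgraph that is almost surely a spanning forest.

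The harder step is showing that this spanning forest is in fact connected almost surely. Here one uses the assumption that $G$ is represented by a simply connected planar map $M$ of finite expected degree, and splits along the circle packing type dichotomy for unimodular random planar maps developed in \cite{AHNR}. In the CP parabolic case the map is recurrent, so WUSF and FUSF both coincide with the ordinary uniform spanning tree, which is almost surely a spanning tree. In the CP hyperbolic case the map is transient, and one invokes planar duality: on a simply connected planar map there is a well-defined dual map $M^*$, and the edge complement of the FUSF of $M$ corresponds to the WUSF of $M^*$. Hence connectivity of the FUSF of $M$ is equivalent to the WUSF of $M^*$ being a single spanning tree. This latter statement is proved in \cite{AHNR} via circle packing and harmonic measure estimates on transient unimodular planar maps, showing that each tree of the WUSF has exactly one end and that there is only one such tree.

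Once the FUSF is identified as a unimodular spanning tree of $G$, soficity follows by combining Elek's theorem \cite{E1} that unimodular trees are sofic with the Elek--Lippner principle \cite{EL} that a graph admitting a unimodular spanning tree is sofic. The principal technical obstacle in this plan lies in the transient case of the connectivity step: the circle-packing based argument that the WUSF on a transient simply connected URPG is a single one-ended tree is the heaviest input, and it is precisely this step that forces the hypothesis that the underlying planar map be simply connected, and thereby motivates the reduction to the one-ended situation carried out earlier in the paper.
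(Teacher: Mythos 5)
This theorem is not proved in the paper at all: the author imports it verbatim, noting that it ``is essentially Theorem 5.13 and Theorem 2 in \cite{AHNR}'', so there is no in-paper argument to compare yours against. Your reconstruction of the AHNR proof has the right skeleton (unimodularity of the FUSF, connectivity via planar duality using simple connectivity to get a well-defined dual map, then soficity via Elek and Elek--Lippner), but the central step as you state it is wrong. Planar duality does \emph{not} say that connectivity of $\mathsf{FUSF}(M)$ is equivalent to $\mathsf{WUSF}(M^*)$ being a single spanning tree. The correct correspondence is between connectivity of the free forest on $M$ and the property that \emph{every component} of the wired forest on $M^*$ has at most one end. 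Your version fails exactly in the case you call the hard one: for a hyperbolic tiling the FUSF is a single tree while the WUSF of the dual has infinitely many components (each one-ended), so the equivalence you assert is false, and a proof built on it would not close. The input actually needed is the one-endedness of all WUSF components of a transient unimodular random rooted network with finite expected degree, which in \cite{AHNR} comes from the wired cycle-breaking/update-tolerance argument of Hutchcroft rather than from circle packing and harmonic measure; circle packing enters elsewhere in that paper.

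Two smaller points. First, the dichotomy you split on is the wrong one: CP parabolicity does not imply recurrence once degrees are unbounded (this is why recurrence is absent from the list of equivalent conditions in Theorem 1 of \cite{AHNR} and from the corollary quoted in this paper); the case split that makes the argument work is recurrent versus transient, with $\mathsf{FUSF}=\mathsf{WUSF}=\mathsf{UST}$ in the recurrent case. Second, your unimodularity argument for the FUSF is slightly off as stated: the finite exhaustion graphs $G_n$ rooted at the \emph{inherited} root $o$ are not unimodular rooted graphs, so one cannot literally pass unimodularity through the local weak limit along the exhaustion. The standard route is that the law of the FUSF is a root-independent measurable function of the isomorphism class of $G$, so the decorated graph $(G,o,\mathfrak{F})$ inherits the mass-transport principle from $(G,o)$. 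The final step, deducing soficity from the existence of a unimodular spanning tree via \cite{E1} and \cite{EL}, matches what the paper does.
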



Conley, Gaboriau, Marks and Tucker-Drob, \cite{CGMT}, have proved results related to Theorem \ref{oneended}. In our setup these can be vaguely phrased as follows: a unimodular simply connected planar map has {\it some} unimodular spanning tree, under certain mild conditions. Their paper is in the context of Borel graphs, and they consider graphs with a Borel 2-basis for the cycle space, which is equivalent to the existence of a Borel embedding that defines a simply connected planar map by a result of Thomassen \cite{Th}, see \cite{CGMT}. They prove that if such an embedding exists and the graph is locally finite and not two-ended, then it has a Borel spanning forest with only one-ended trees. This defines a one-ended spanning tree for the dual graph. In \cite{CGMT} it is also shown that Theorem \ref{main} (and hence Theorem \ref{sofic}) holds for the case of planar {\it Cayley graphs}.

Suppose that a URPG $G$ has a unimodular embedding into the plane. Then Theorem \ref{oneended} provides us with a sufficient condition to be sofic: whenever this embedding is simply connected.
The graph structure of $G$ does not directly determine whether the embedding is simply connected. To see this, consider the Cayley graph corresponding to $<a,b \, |\, a^3, b^3>$, which is the free product of two copies of the 3-element cyclic group. (Recall that if two groups $G_1=<S_1\,|\, R_1>$ and $G_2=<S_2\,|\, R_2>$ are given in terms of defining relators, 
where $S_i$ is a generatings set for $G_i$ and $R_i$ are defining relators, then the free product of $G_1$ and $G_2$ is $<S_1, S_2\,|\, R_1, R_2>$.)
This graph does have unimodular embeddings that are simply connected and that are not (Figure \ref{triangular} hints the proof of this claim, with the explanation below it). As the next observation shows, for one-ended URPG's, any unimodular embedding is simply connected. This is close to a characterization, because ``most" URPG's with 2 or infinitely many ends can only be embedded in the sphere with more than 1 accumulation points of edges, and hence there is no simply connected planar map that would represent them. See Remark \ref{TT} for more on the existence of simply connected embeddings. 

\begin{figure}[h]
\vspace{0.1in}
\begin{center}
\includegraphics[keepaspectratio,scale=1.5]{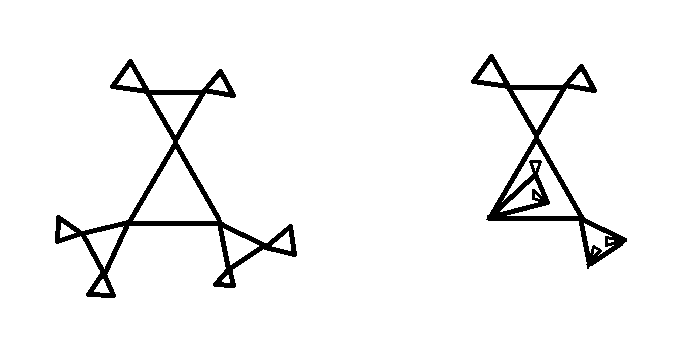}
\caption{A representation of (part of the) same infinite graph by a simply connected map (on the left) and by one that is not. The embedding on the left is unimodular (being an atomic probability measure on a single decorated graph), and we can make the one on the right unimodular by deciding for each triangle $\Delta$ indepedently how its three neighboring triangles should be mapped with regard to the two 
components of the sphere minus 
$\Delta$. After making these decisions for every triangle, the combinatorial embedding is determined up to orientation, and one can decide about the latter by a coin flip.}\label{triangular}
\end{center}
\end{figure}

\begin{proposition}\label{simplyconnected}
Let $G$ be a one-ended URPG that has a unimodular combinatorial embedding into the plane. Then this embedding defines a simply connected planar map. 
\end{proposition}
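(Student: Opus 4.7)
The plan is to give a purely topological argument; unimodularity of the embedding is not needed beyond the setup.

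I would realize the planar map from $\{\pi_v\}$ as the $2$-dimensional CW-complex $X$ whose $1$-skeleton is $G$ and which has one $2$-cell glued along each combinatorial face. Since every edge lies on exactly two combinatorial faces and the cyclic permutation $\pi_v$ makes the link of each vertex a single circle, $X$ is a topological $2$-manifold. Planarity of $\{\pi_v\}$ gives an embedding $X \hookrightarrow S^2$, so $X$ is a connected planar $2$-manifold. The proposition is equivalent to showing $X$ is homeomorphic to $S^2$ or to $\R^2$, since in either case the union of the closures in $S^2$ of the face disks is the entire sphere or the sphere minus one point.

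The key step is to show that $X$ has at most one topological end. Suppose not; then, using local finiteness of the CW-structure (finite vertex degrees and each edge in exactly two faces), there is a finite subcomplex $K \subset X$ for which $X \setminus K$ has at least two connected components $A, B$ with non-compact closure. A non-precompact subset of a locally finite $2$-manifold CW-complex must contain infinitely many $0$-cells (only finitely many edges and faces can attach to a finite vertex set), so $A$ and $B$ each contain infinitely many vertices of $G$. Now any edge of $G$ with both endpoints outside $V(K)$ is a $1$-cell not in $K$ (a subcomplex contains the endpoints of each of its edges) and therefore sits in $X \setminus K$; consequently any walk in $G \setminus V(K)$ traces a path in $X \setminus K$ and cannot cross between $A$ and $B$. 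Hence the finite vertex set $V(K)$ separates $G$ into at least two infinite components, contradicting the assumption that $G$ is one-ended.

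This forces $X \cong S^2$ or $X \cong \R^2$, which gives the desired simple connectedness. The main subtlety is really the initial setup rather than the argument: one must treat combinatorial faces that are infinite walks (where a single disk is glued along an infinite boundary cycle) carefully so that $X$ is genuinely a $2$-manifold and its topological ends are well-defined. Once this foundation is laid, the argument is elementary and, notably, works for any deterministic one-ended locally finite planar combinatorial embedding; the hypothesis that the embedding is unimodular enters only because the ambient setting of the paper is unimodular random graphs.
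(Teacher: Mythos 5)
Your argument is correct and is essentially the paper's own proof, just written out in more detail: both realize the combinatorial embedding as a planar surface, show it has at most one end because a finite separating collection of cells would force $G$ to have at least two ends, and then invoke the classification of planar surfaces (compact $\Rightarrow$ sphere, one-ended $\Rightarrow$ sphere minus a point). Your added care about infinite faces and the reduction of surface ends to graph ends is a legitimate filling-in of steps the paper leaves implicit, not a different approach.
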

\begin{proof}
The planar map defined by a combinatorial embedding can only be a surface that is compact or one-ended (as a topological space). Otherwise one could find finitely many faces whose removal cuts the surface into at least two pieces that each contain infinitely many faces, and hence $G$ would have at least two ends. The only compact planar surface is the sphere, and the only one-ended planar surface is the sphere minus one point, as explained in Section 7 of \cite{AHNR} based on \cite{BR}. 
\qed
\medskip
\end{proof}
Hence, if $G$ also has finite expected degree, the conclusion of Theorem \ref{oneended} holds. To illustrate that the one-endedness assumption cannot be omitted in general, note that the Cartesian product of a 3-cycle with a biinfinite path or the Cartesian product of one edge and a 3-regular tree are planar, but they cannot be represented by simply connected planar maps.

\begin{remark}\label{TT}
In \cite{TT} a graph theoretical characterization is given for URPG's that have some simply connected embedding. Then it is shown, based on the method of Section \ref{sec_embedding}, that for such graphs there is also simply connected unimodular combinatorial embedding. Theorems \ref{main} and \ref{sofic} remain true for these URPG's as well, by the same argument as here.
\end{remark}

Theorem \ref{embedding} has some further corollaries. Without loss of generality we will assume that our URPG is ergodic (extremal), and will skip saying ``almost surely".

\begin{corollary}
The dichotomy results of Theorem 1 in \cite{AHNR} are valid for every unimodular random one-ended planar graph $G$. In particular, the following are equivalent. 
\begin{itemize}
\item $G$ is invariantly amenable;
\item there is a unimodular planar map with average curvature 0 that represents $G$;
\item every harmonic Dirichlet function is a constant;
\item Bernoulli($p$) percolation has at most one infinite component for every $p\in [0,1]$ almost surely.
\end{itemize}
\end{corollary}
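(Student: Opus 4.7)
The plan is to reduce the corollary to the unimodular-map version proved in \cite{AHNR} by using Theorem \ref{embedding} to install a unimodular combinatorial embedding on $G$. By Theorem \ref{embedding}, there exist cyclic permutations $\pi_v$, $v\in V(G)$, such that $(G,(\pi_v))$ is unimodular; this is exactly the data of a unimodular random planar map $M$ with underlying graph $G$. Hence $M$ lives in the setting where the dichotomy of Theorem 1 of \cite{AHNR} applies.

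Three of the four conditions in the corollary — invariant amenability, the constancy of harmonic Dirichlet functions, and uniqueness of the Bernoulli($p$) infinite cluster for all $p$ — are properties of the graph $G$ alone, independent of the embedding. The \cite{AHNR} dichotomy establishes the equivalence of these three (in the presence of a unimodular planar map) together with the existence of a representing unimodular map of average curvature $0$. Since $M$ exists by Theorem \ref{embedding}, the three intrinsic equivalences transfer directly to $G$. For the curvature condition, one direction is immediate: if $G$ is represented by any unimodular planar map of average curvature $0$, then applying \cite{AHNR} to that map yields invariant amenability of $G$. Conversely, if $G$ is invariantly amenable, then applying \cite{AHNR} to the map $M$ from Theorem \ref{embedding} forces $M$ itself to have average curvature $0$, furnishing the required map.

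The one subtlety I foresee is that the \cite{AHNR} dichotomy is formulated for simply connected unimodular planar maps, whereas the embedding provided by Theorem \ref{embedding} need not be simply connected in general. I would resolve this by first applying Theorem \ref{decompose} to partition $G$ into finite or one-ended components $\mathcal{H}$ arranged tree-like with finite cut edge-sets between them, and then applying Theorem \ref{embedding} componentwise. Each one-ended component $H$, equipped with its unimodular embedding, is a simply connected unimodular planar map by Proposition \ref{simplyconnected}; to such an $H$ the \cite{AHNR} dichotomy applies directly. What remains is to show that each of the four properties passes between $G$ and (a typical) component $H\in\mathcal{H}$.

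I expect this transfer step to be the main technical obstacle. For invariant amenability and for the harmonic Dirichlet class, the finiteness of the cut-edge sets between adjacent components, together with the tree-like adjacency structure, should allow a standard gluing/restriction argument along the unimodular decomposition. For Bernoulli($p$) uniqueness, the fact that the adjacency graph of $\mathcal{H}$ is a tree and that every pair of components is separated by only finitely many edges should let one lift uniqueness from individual components to $G$ by a Mass Transport argument (via finitely many ``bridges''). The curvature condition transfers naturally since an average-curvature-$0$ unimodular embedding of each component, combined into a tree-like map across the whole decomposition (placing each finite component as a single face region), yields such an embedding of $G$.
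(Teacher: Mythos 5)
The paper offers no proof of this corollary at all: it is presented as an immediate consequence of Theorem \ref{embedding} together with Theorem 1 of \cite{AHNR}, and that is precisely the content of your first two paragraphs --- install a unimodular combinatorial embedding to obtain a unimodular planar map $M$ representing $G$, note that invariant amenability, the harmonic Dirichlet property and Bernoulli uniqueness are properties of the graph alone, and dispose of the existential quantifier in the curvature condition by applying the dichotomy to the particular map $M$. Up to that point your reconstruction coincides with the paper's intended argument.

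Your proposed repair of the simple-connectivity issue, however, contains a genuine error. You reduce to the pieces of the decomposition of Theorem \ref{decompose} and claim that ``each of the four properties passes between $G$ and (a typical) component $H\in\mathcal{H}$.'' This transfer principle is false. Take $G$ to be the free product of a triangle with itself (the graph of Figure \ref{triangular}): Theorem \ref{decompose} cuts it along size-two end-cuts into pieces that are finite or at worst one-ended and recurrent, and every such piece is invariantly amenable, supports only constant harmonic Dirichlet functions, and has at most one infinite Bernoulli cluster for trivial reasons; yet $G$ itself is invariantly nonamenable, carries nonconstant harmonic Dirichlet functions, and has a nonuniqueness phase since $p_c<p_u=1$. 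The point is that for an infinitely-ended $G$ it is the infinite tree structure of the decomposition, not the geometry of the individual pieces, that destroys all four properties, so no componentwise transfer can work. If a reduction to the simply connected case is genuinely required by the version of the \cite{AHNR} dichotomy being invoked, the multi-ended cases must instead be settled by a direct trichotomy on the number of ends: all four conditions fail simultaneously when $G$ has infinitely many ends and all hold simultaneously when $G$ has two ends; these facts need their own (known but nontrivial) justifications and are not supplied by your sketch.
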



\begin{proofof}{Theorem \ref{main}}
It follows from Theorem \ref{oneended}, Proposition \ref{simplyconnected} and Theorem \ref{embedding}.
\qed
\medskip
\end{proofof}

\begin{proofof}{Theorem \ref{sofic}}
Follows from Theorem \ref{main} essentially by \cite{EL}. See Section 8 of \cite{AHNR} for an elaboration of this implication and further references. A brief sketch of the proof is the following. Every unimodular random tree is {\it strongly sofic}, i.e., for any unimodular decoration of the tree there exists a sofic approximation by finite decorated graphs. Once we have a unimodular spanning tree $T$ in a unimodular graph $G$, we can encode $G$ as a unimodular decoration of $T$, and from the finite decorated graphs approximating this decorated $T$, we can obtain a sequence of finite graphs approximating $G$.
\qed
\medskip
\end{proofof}

In an earlier version of this manuscript \cite{Tarxiv}, we erroneously claimed that we can prove Theorems \ref{sofic} and \ref{main} without the assumption of one-endedness. The argument was based on a unimodular tree-like decomposition of $G$ to pieces that are finite or one-ended, but a mistake was found, so it remains open whether such a decomposition exists for an arbitrary unimodular random graph (or at least planar ones), and whether URPG's with infinitely many ends are sofic.




\subsection{Unimodular random graphs, soficity}\label{definitions}
Given a graph $H$, $x\in V(H)$, $r\in \R^+$, denote by $B(H,x,r)$ the ball of radius $r$ around $x$ in $H$.
Let $\calG_*$ be the set of all locally finite rooted graphs up to rooted isomorphism (isomorphism preserving the root). For a rooted graph $(G,o)$, we denote the respective element (equivalence class) of $\calG_*$ by $[G,o]$, but if there is no ambiguity, we usually just refer to the equivalence class by $(G,o)$ or $G$.
One can make $\calG$ a metric space by defining the distance between two elements $(G,o)$ and $(G',o')$ by $\inf \{2^{-r}: B(G,o,r)\cong B(G',o',r)\}$, where $\cong$ is the relation of being rooted isomorphic. 
A probability measure on $\calG$ is called a random rooted graph. One may also consider {\it marked} rooted graphs, in which case marks (labels) coming from some fixed metric space are also present on some vertices and/or edges. The definitions naturally extend to this setup.
Consider a sequence $G_n$ of finite graphs, and let $o_n$ be a uniformly chosen vertex of $G_n$.  We say that $G_n$ {\it converges} to a random rooted graph $G=(G,o)$ in the {\it local weak} (or {\it Benjamini-Schramm}) sense if for any finite rooted graph $(H,o')$, $\P(B(G_n,o_n,R)\cong (H,o'))\to \P(B(G,o,R)\cong (H,o'))$. In other words, the probability measure corresponding to $(G_n,o_n)$ weakly converges to the probability measure corresponding to $(G,o)$.
If a given random rooted graph $G=(G,o)$ is the Benjamini-Schramm limit of a sequence of finite graphs, we call it {\it sofic}.

Similarly to $\calG_*$, define $\calG_{**}$ as the set of graphs with an ordered pair of vertices, up to isomorphisms preserving the ordered pair. Suppose that $G_0$ is a finite graph, $o_0\in V(G)$ a uniformly chosen root, and $\mu$ the probability measure on $\calG_{*}$ that samples $(G_0,o_0)$. For any Borel $f:\calG_{**}\to [0,\infty]$, we have the equation
\begin{equation}\label{mtp}
\int\sum_{x\in V(G)} f(G,o,x) d\mu([G,o])=\int\sum_{x\in V(G)} f(G,x,o) d\mu([G,o]),
\end{equation}
because both sides are equal to $|V(G_0)|^{-1}\sum_{x,y\in V(G_0)} f(G_0,x,y)$.
Given a probability measure $\mu$ on $\calG_*$, referred to as a {\it random rooted graph}, we
say that it is {\it unimodular} if \eqref{mtp} holds. It is easy to check that sofic graphs are unimodular. 
A major open question is whether the converse is also true.
\begin{question}{(Aldous, Lyons \cite{AL})}
Is every unimodular random (marked) graph sofic?
\end{question}
Whether the Cayley diagram of a group is sofic is a central question itself, and large classes of groups are known to be sofic; see \cite{P} for references or \cite{E2} for some more recent examples. In the class of planar unimodular random graphs that are known to be sofic are unimodular random trees (\cite{B}, \cite{E1}), or Curien's Planar
Stochastic Hyperbolic Triangulations \cite{C}, whose approximability by uniform random triangulations of appropriate genus was shown recently in \cite{BL}.

\section{Unimodular combinatorial planar embeddings for unimodular random planar graphs}\label{sec_embedding}

\subsection{Some tools: Whitney's theorem, generalized Tutte decomposition}

The following theorem was first proved by Whitney for finite simple graphs, and then generalized (to a broader setting than the one below) by Imrich, \cite{I}. 
Recall that having a unique combinatorial embedding up to orientation is the same as having a unique embedding into the plane up to homeomorphisms. 
\begin{theorem}[Whitney, Imrich]\label{Whitney}
Let $G$ be a 3-connected locally finite planar simple graph. Then $G$ has two combinatorial embeddings into the plane, and one arises from the other by inverting all permutations.
\end{theorem}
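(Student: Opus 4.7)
My plan is to follow Whitney's classical strategy for finite $3$-connected planar graphs, adapted to the locally finite setting along the lines of Imrich's generalization. The central idea is a purely combinatorial characterization of which cycles of $G$ appear as face boundaries in any planar embedding; once the collection of facial cycles is pinned down by $G$ alone, the cyclic permutations $\pi_v$ at each vertex are forced up to a simultaneous inversion.

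First, I would establish the Whitney characterization: in a $3$-connected locally finite planar simple graph, a finite cycle $C$ is the boundary of a face in some (equivalently, every) planar combinatorial embedding if and only if $C$ is induced in $G$ and $G \setminus V(C)$ is connected. One direction is topological: any face boundary in a planar embedding is an induced cycle whose vertex set does not disconnect the rest of $G$, by standard Jordan-curve arguments applied to a single fixed embedding. The converse is the substantive step and uses $3$-connectedness exactly as in Whitney's original proof: if a non-separating induced cycle $C$ is not a face in some embedding, one locates two vertices of $C$ whose removal would split $G$, contradicting $3$-connectedness. Because this argument is entirely local, it transfers verbatim to the locally finite setting.

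Next, I would reconstruct $\pi_v$ at each $v \in V(G)$. Two edges at $v$ are consecutive in any planar embedding's rotation exactly when they share a common facial cycle at $v$. Since the set of facial cycles is now determined by $G$ alone, the face-adjacency relation on the $\deg(v)$ edges incident to $v$ forms a single cycle whose two cyclic orientations are the only candidates for $\pi_v$, and these two candidates are inverses of each other. A coherence check using edges shared between neighboring vertices then shows that fixing the orientation at one vertex forces it at every other vertex, so the two global embeddings differ precisely by simultaneous inversion of every $\pi_v$.

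The main technical obstacle I anticipate is handling combinatorial faces whose boundary walk is infinite, since the non-separating induced cycle characterization only captures the finite facial cycles directly. Local finiteness rescues the argument: each vertex has finite degree, so only finitely many faces meet at $v$, and any ambiguity in the cyclic order at $v$ would produce a non-facial induced non-separating cycle in $G$, contradicting the characterization above. To make this precise I would exhaust $G$ by an increasing sequence of finite $3$-connected subgraphs (using that $3$-connectedness is inherited by suitable subdivisions/subgraphs, as in Imrich's setup), apply the classical finite Whitney theorem inside each, and pass to the limit to obtain uniqueness of the $\pi_v$ up to a global inversion, which is the content of Imrich's extension.
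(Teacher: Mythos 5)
The paper does not actually prove this statement; it quotes it, crediting Whitney for the finite case and Imrich \cite{I} for the locally finite extension, so your attempt can only be judged on its own merits. The skeleton you describe --- characterize the facial cycles as the non-separating induced cycles, read off each rotation $\pi_v$ from the faces meeting at $v$, and propagate a choice of orientation along shared edges --- is indeed the classical Whitney argument and is sound for finite $3$-connected planar simple graphs.

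Two steps in your passage to the infinite case do not work as stated. First, the proposed exhaustion of $G$ by finite $3$-connected subgraphs need not exist: the hexagonal lattice is a $3$-connected locally finite planar graph, but every finite planar simple graph of girth $6$ satisfies $|E|\le\frac{3}{2}(|V|-2)$ by Euler's formula and hence has a vertex of degree at most $2$, so the hexagonal lattice contains no finite subgraph of minimum degree $3$, let alone a $3$-connected one. Any correct reduction to the finite theorem must go through something weaker than subgraph containment; compare the paper's Lemma \ref{3conn}, which takes an arbitrary finite exhaustion $G_n$ and extracts the $3$-connected $3$-block of $G_n$ containing the root and its neighbours --- a graph that is only a \emph{topological} subgraph of $G$ --- and then applies the finite Whitney theorem to that block. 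Second, your treatment of faces with infinite boundary walks is a gesture rather than an argument: at a vertex $v$ incident to such a face, the finite facial cycles through $v$ need not link all $\deg(v)$ edges into a single cycle, and the claim that ``any ambiguity in the cyclic order at $v$ would produce a non-facial induced non-separating cycle'' is precisely what needs to be proved there. Until these two points are repaired the proof is incomplete, although the overall strategy is the standard and correct one.
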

Now, if one allows (finite bundles of) parallel edges, the theorem remains valid, with the only modification that all the parallel edges between vertices $v$ and $w$ appear in some consecutive order in the $\pi_v$ and $\pi_w$, and the uniqueness of the embedding holds up to arbitrary permutations within these bundles of parallel edges. In what follows, we will apply the theorem in that sense: {\it whenever we take the ``unique" combinatorial embedding of the graph, we mean the random embedding where we first (uniquely) embed the corresponding simple graph, and then add the parallel copies of each edge with a uniform random permutation on them.}

The Tutte decomposition of a finite graph was developed by Tutte in \cite{T} and the uniqueness of the decomposition was shown in \cite{HT}. In \cite{DSS}, Droms, Servatius and Servatius extended the results to infinite locally finite graphs. After preparing the necessary terminology, we will quote their result. More details are found in \cite{DSS}.

Given some graph $G$, a {\it block} of $G$ is a maximal 2-connected subgraph of $G$ (with respect to containment). 
A {\it multilink} is a pair of adjacent vertices together with all of finitely many parallel edges between them. 
A {\it 3-block} is a graph which is a cycle, a finite multilink or a locally finite 3-connected graph, and has at least 3 edges. We mention that by definition both blocks and 3-blocks are allowed to be infinite.

Suppose that $A$ and $B$ are two 
disjoint graphs, and there is a function $f$ that picks one edge $f_A$ from $A$, assigns a tail $f_A^-$ and a head $f_A^+$ to it, and picks an edge $f_B$ from $B$ and assigns a tail $f_B^-$ and a head $f_B^+$ to it.
Then define the {\it edge amalgam of $A$ and $B$ over $f$} as the union of the graphs on $A$ and $B$, with $f_A^-$ identified with $f_B^-$ and $f_A^+$ identified with $f_B^+$, and $f_A$ and $f_B$ removed. See Figure \ref{amalgam1} for an illustration. We denote the edge amalgam of $A$ and $B$ by $A+_f B$ or simply $A+B$. It is easy to check that $A+B$ is 2-connected if and only if $A$ and $B$ are both 2-connected. 
We call a countable labelled tree $T$ an {\it edge amalgam tree} if every vertex $\alpha\in V(T)$ is labelled by a graph $G_\alpha$ (finite or infinite) which are pairwise disjoint, every edge $\{\alpha,\beta\}\in E(T)$ is labelled by a function $f=f(\alpha,\beta)$ that defines an edge amalgam of $G_\alpha$ and $G_\beta$, and finally, every edge $e$ of $G_\alpha$ is amalgamated to at most one other edge, or more precisely, there is at most one function $f$ and $G_\beta$ such that $G_\alpha$ and $G_\beta$ get amalgamated along $e$ (meaning $f_{G_\alpha}=e$). If there is exactly one such $f$, then we call $e$ virtual. One can perform amalgamation over all the edges of $T$ in some order, and the end result will be independent from the order in which the edges of $T$ are chosen, by the last condition. Moreover, an edge will be present in the final graph $\Gamma(T)$ if and only if it is not virtual.

\begin{figure}[h]
\vspace{0.1in}
\begin{center}
\includegraphics[keepaspectratio,scale=0.85]{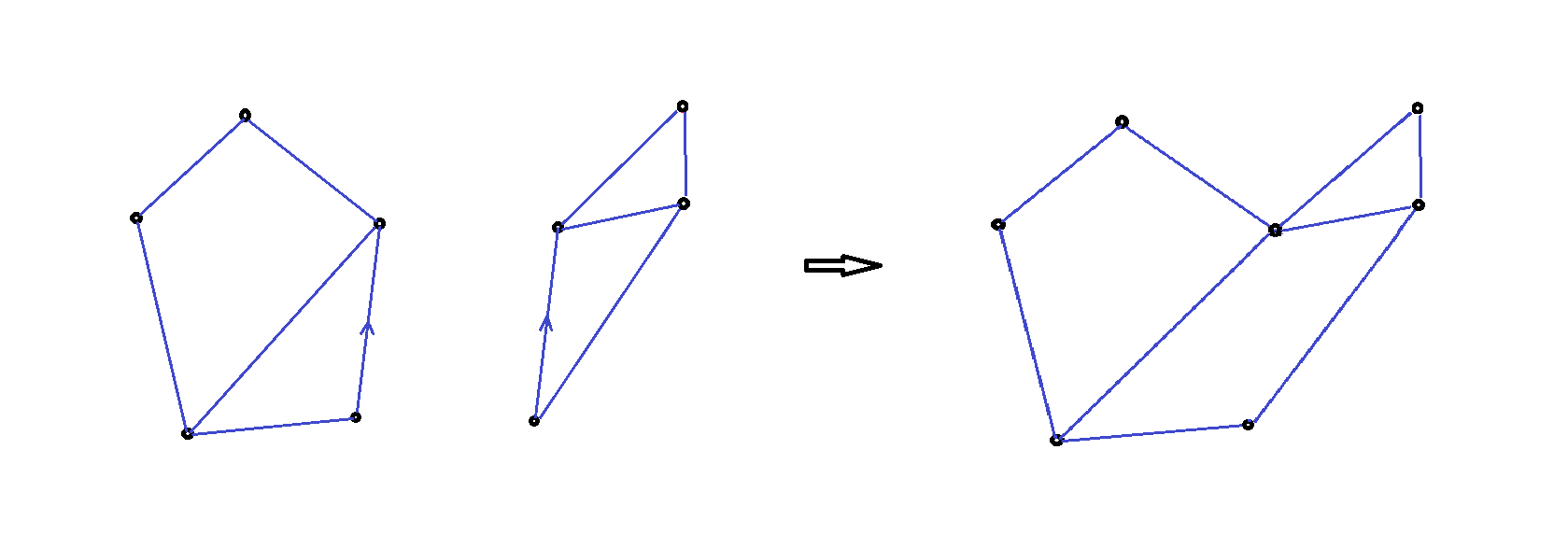}
\caption{Amalgamating two graphs. When both of them come with a combinatorial planar embedding, one obtains a combinatorial embedding of the amalgamated graph.}\label{amalgam1}
\end{center}
\end{figure}

In \cite{DSS} a {\it 3-block tree} is defined, as an edge amalgam tree (where the tree may not be locally finite), where every label $G_\alpha$ is a 3-block, and no two multilinks or two cycles are neighbors in $T$. This latter requirement is only important because it ensures the following uniqueness theorem.  

\begin{theorem}{{\rm (Droms, Servatius, Servatius, \cite{DSS}) }}\label{DSS}
To any locally finite 2-connected graph $G$ there corresponds a unique 3-block tree $T$ such that $\Gamma(T)=G$.
\end{theorem}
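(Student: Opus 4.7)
The plan is to extend Tutte's decomposition of finite 2-connected graphs into 3-connected components to the locally finite setting; the argument splits into an existence construction and a uniqueness verification. For existence, rather than proceeding recursively (which need not terminate for infinite $G$), I would define the decomposition globally via an equivalence relation on $E(G)$. Call a pair $\{u,v\}\subset V(G)$ a \emph{2-separation} if $G\setminus\{u,v\}$ is disconnected, and declare two edges $e,e'\in E(G)$ equivalent if no 2-separation places them on opposite sides, with the convention that parallel edges between the same separating pair are grouped together. The proposed vertex set of $T$ consists of these equivalence classes, each augmented by a virtual edge $\{u,v\}$ for every 2-separation of $G$ that borders it.

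The central structural lemma I would prove is that each such augmented equivalence class is a 3-block, i.e.\ a cycle, a finite multilink, or a locally finite 3-connected graph. The trichotomy comes from analyzing how 2-separations can act inside a class: if every pair of edges in the class is part of a larger indecomposable piece, one obtains a 3-connected graph; if the class is ``linear" along a chain of 2-separations, one obtains a cycle; if it consists of parallel copies of a single separating pair, one obtains a multilink. I would then define the edges of $T$ by joining two classes whenever they share a virtual edge, merge adjacent cycles into longer cycles and adjacent multilinks into larger multilinks to enforce the non-neighbor condition, and verify that the adjacency is a tree: a cycle in this graph would produce a nontrivial cyclic chain of 2-separations in $G$, which can be excluded by noting that the sides of the 2-separations it involves would have to nest consistently. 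Performing the amalgams as prescribed by $T$ recovers $G$, since every non-virtual edge of $G$ appears in exactly one class and every virtual edge is identified with exactly one partner.

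For uniqueness, I would argue that any 3-block tree $T'$ with $\Gamma(T')=G$ produces the same data. Each virtual edge of $T'$ has endpoints whose removal separates the vertex sets coming from the two subtrees, so virtual edges of $T'$ correspond to 2-separations of $G$; conversely every relevant 2-separation must appear. Using Whitney--Imrich rigidity inside each 3-connected block and the obvious rigidity of cycles and multilinks, the blocks of $T'$ must coincide with the classes constructed above, and the amalgamation data is forced. The main obstacle is handling infinitely many interacting 2-separations at once: one must avoid any inductive peeling, and must verify that the constructed object is genuinely a tree and that each 3-block is a well-defined graph rather than an ill-behaved limit. A secondary but essential subtlety is the canonical merging of adjacent cycles/multilinks, without which the decomposition fails to be unique; I would implement this merging in a single global pass and check that it is order-independent.
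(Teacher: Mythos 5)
First, a point of order: the paper does not prove this statement. Theorem \ref{DSS} is imported as a black box from Droms, Servatius and Servatius \cite{DSS} (the finite case going back to Tutte \cite{T} and Hopcroft--Tarjan \cite{HT}), so there is no proof in the paper to compare yours against. What can be judged is whether your sketch would stand on its own, and there it has genuine gaps at exactly the points that carry the difficulty.

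The central problem is your structural lemma. The relation ``no 2-separation places $e$ and $e'$ on opposite sides'' does not produce the 3-blocks, even in the simplest examples: in a $4$-cycle $v_1v_2v_3v_4$ the only 2-separations are $\{v_1,v_3\}$ and $\{v_2,v_4\}$, and every pair of distinct edges is placed on opposite sides by one of them, so all your classes are singletons --- whereas the correct 3-block tree is a single vertex labelled by the $4$-cycle itself. Thus the cycle blocks cannot come out of the equivalence relation at all; they must be manufactured entirely by your ``merging pass,'' which is precisely the step you leave unspecified and which governs both well-definedness and uniqueness. Related untreated issues: the relation is not obviously transitive; an edge both of whose endpoints lie in a separating pair (e.g.\ the shared edge of two triangles in $K_4$ minus an edge) belongs to no side and must be extracted into a multilink block, which your convention about parallel edges does not cover; and in the infinite setting you must show that the relevant family of 2-separations is nested so that the factor structure is a tree and each block is a locally finite graph rather than a degenerate limit --- ``a single global pass'' names the difficulty without resolving it. Finally, your uniqueness argument invokes ``Whitney--Imrich rigidity,'' but Theorem \ref{Whitney} concerns uniqueness of planar embeddings of 3-connected graphs and is irrelevant to the combinatorial uniqueness of the 3-block tree, which holds with no planarity hypothesis. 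As written, your text is a reasonable map of the known proof strategy, not a proof; for the purposes of this paper the correct move is simply to cite \cite{DSS}, as the author does.
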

The tree $T$ in the theorem can be represented as a labelling of the vertices of $G$, where the labels 
encode which vertices are in the same $G_\alpha$ and
tell the locations of the virtual edges. This can be constructed from $G$ through a local algorithm: from a big enough neighborhood of a vertex we can tell with arbitrary big precision whether it is a cutset of two points, hence we can also determine the labels in a given neighborhood of $o$ with arbitrary precision.
Hence $(G,T)$ is unimodular if $G$ is unimodular.

\subsection{A unimodular combinatorial embedding}

\begin{lemma}\label{3conn}
If $G$ is a 3-connected URPG and it is a simple graph, then the combinatorial embedding in Theorem \ref{Whitney} is unimodular. Consequently, Theorem \ref{embedding} holds for $G$.
\end{lemma}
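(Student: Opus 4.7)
The plan is to realize the random embedding $\pi$ from Theorem \ref{Whitney} (together with the convention immediately below it) as a conditional randomization of $G$ whose law $\mu_G$ given $G$ is canonically determined by the unrooted isomorphism class of $G$. By Theorem \ref{Whitney}, 3-connectedness provides an unordered pair of combinatorial planar embeddings of the underlying simple graph, related by global inversion; independently, within each multilink (necessarily finite since $G$ is locally finite) we sample a uniform random ordering of the parallel copies. Thus $\mu_G$ is the product of the uniform distribution on the two Whitney--Imrich orientations with the product of uniform measures on the orderings of the individual multilinks. Crucially, $\mu_G$ does not depend on a choice of root.

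Unimodularity of $(G,o,\pi)$ then follows from unimodularity of $G$ by an averaging argument. For an arbitrary Borel function $h$ on doubly-rooted graphs carrying cyclic-permutation marks at the vertices, set $\tilde h(G,o,x) := \int h(G,o,x,\pi)\,d\mu_G(\pi)$. Since $\mu_G$ is root-independent, $\tilde h$ is a well-defined Borel function on $\calG_{**}$ (with no marks). Applying \eqref{mtp} for the unimodular graph $(G,o)$ to $\tilde h$, and then unwinding the integral via $\E = \E_{(G,o)}\E_{\pi \mid G}$, yields \eqref{mtp} for the marked triple $(G,o,\pi)$. This proves that $\pi$ is a unimodular combinatorial embedding of $G$; the ``consequently'' clause is then immediate, since $\pi$ is by construction a planar combinatorial embedding, which is exactly the content of Theorem \ref{embedding} restricted to the 3-connected case.

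The one point that requires some care is verifying that $\mu_G$ is genuinely intrinsic to the unrooted graph and does not tacitly depend on some auxiliary labelling (used, for instance, to distinguish the two Whitney--Imrich orientations). But Theorem \ref{Whitney} delivers these orientations as an unordered pair, each the inverse of the other, so a fair-coin flip between them requires no labelling; similarly the uniform ordering on each multilink is automorphism-equivariant. I do not anticipate any other serious obstacle: the lemma is essentially an instance of the general principle that an automorphism-equivariant conditional randomization of a unimodular object remains unimodular.
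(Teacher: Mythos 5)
Your reduction of the lemma to the principle that an isomorphism-equivariant conditional randomization of a unimodular graph is again unimodular is sound, and the averaging step with $\tilde h(G,o,x)=\int h(G,o,x,\pi)\,d\mu_G(\pi)$ is the standard way to verify \eqref{mtp} for such a marking. The gap is that you assume, rather than prove, the one fact that carries the content of the lemma: that $(G,o)\mapsto\mu_G$ is a Borel kernel, equivalently that the Whitney--Imrich embedding is a measurable function of the graph. Theorem \ref{Whitney} gives existence and uniqueness of the pair $\{\pi,\pi^{-1}\}$ for each individual infinite $3$-connected planar graph, but says nothing about how $\pi$ depends on $G$; without measurability the joint law of $(G,o,\pi)$ is not a well-defined random element of the space of marked rooted graphs, your $\tilde h$ need not be Borel, and the mass-transport identity cannot even be formulated for the marked object. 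Declaring that ``$\mu_G$ is root-independent'' addresses equivariance but not measurability.

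The paper closes exactly this gap by showing that $\pi_o$ (up to the single global inversion, decided by one coin flip, and up to the uniform reorderings within multilinks) is determined by a finite ball around $o$: taking a finite exhaustion $G_1\leq G_2\leq\cdots$ of $G$, for each neighbor $x$ of $o$ the three internally disjoint $o$--$x$ paths guaranteed by $3$-connectedness eventually lie in $G_n$, which forces $o$ and $x$ into the same $3$-connected $3$-block $B_n$ of the $3$-block tree decomposition of $G_n$ (Theorem \ref{DSS}); $B_n$ has a unique embedding by Theorem \ref{Whitney} and is a topological subgraph of $G_n$ and hence of $G$, so the cyclic order at $o$ is already pinned down by $G_n$ and agrees with the one coming from the unique embedding of $G$. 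This locality is what makes the embedding a measurable (indeed locally determined) factor of $G$ together with the auxiliary randomness, after which your equivariance-plus-averaging argument applies verbatim. You need to supply this step; the rest of your write-up is fine.
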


\begin{proof}
Fix an instance of $(G,o)$; we will show that a large enough neighborhood of $o$ determines $\pi_o$, up to taking the inverse permutations (which we do not mention going forward; we will just flip a coin at the end to decide which of the two to chose).
Let $G_1\leq G_2\leq\ldots$ be a finite exhaustion of $G$, with $o\in G_1$. Fix a neighbor $x$ of $o$ arbitrarily. By the 3-connectedness of $G$, there exist 3 pairwise inner-disjoint paths between $o$ and $x$, and so there is an $N$ such that $G_n$ contains all these paths if $n>N$. In the 3-block tree decomposition of $G_n$, $n>N$, $o$ and $x$ have to belong to the same 3-block (otherwise there is a 2-point separating set between them, more specifically, the endpoints of a virtual edge of the block that contains one of them). So, since $x$ was arbitrary, if $n$ is large enough, all the neighbors of $o$ are in the same 3-block $B_n$ as $o$ in the 3-block tree decomposition of $G_n$, and this 3-block is 3-connected (and not a cycle or multilink, by construction). 
The $B_n$ has a unique embedding by Theorem \ref{Whitney}. Although $B_n$ is not a subgraph of $G_n$, it is easy to see that it is a topological subgraph of $G_n$ (that is, one can replace all the virtual edges of $B_n$ by pairwise inner-disjoint paths in $G_n$). Then it is also a topological subgraph of $G$, because $G_n\leq G$. Therefore the topological subgraph of $G$ that is isomorphic to $B_n$ is also uniquely embedded in the plane.
This implies that the permutation on the neighbors of $o$ defined by the unique embedding of $G$ is the same as the one defined by the embedding of $G_n$. This holds for any $n>N$ proving our claim.
\medskip
\end{proof}

\begin{lemma}\label{edge}
Let $T$ be a 3-block tree consisting of a single edge $\{\alpha,\beta\}$. Then for any combinatorial embedding $\pi^{G_\alpha}$ of $G_\alpha$ and $\pi^{G_\beta}$ of $\G_\beta$ into the plane, there is a combinatorial embedding of $G_\alpha+G_\beta$ in the plane whose restriction to $G_\alpha$ is $\pi^{G_\alpha}$, and restriction to $G_\beta$ is $\pi^{G_\beta}$.
Here uniqueness is understood modulo permutations within bundles of parallel edges.

\noindent
More generally, if $T$ is a finite 3-block tree and a combinatorial embedding $\pi^{G_\alpha}$ is given for every $G_\alpha$, then $\Gamma(T)$ has a unique combinatorial embedding in the plane whose restriction to $G_\alpha$ is $\pi^{G_\alpha}$ for every $\alpha\in V(T)$. One can obtain this embedding by repeatedly applying the previous paragraph to the edges of $T$, in an arbitrary order.
\end{lemma}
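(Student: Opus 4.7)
The plan is to handle the single-edge case by an explicit topological gluing, and then bootstrap to a finite $T$ by induction on $|E(T)|$. By the definition of planar combinatorial embedding, the permutations $\pi^{G_\alpha}$ and $\pi^{G_\beta}$ arise from genuine planar embeddings $\iota_\alpha,\iota_\beta$ of $G_\alpha,G_\beta$ on the sphere. The virtual edge $f_{G_\alpha}$ borders exactly two faces of $\iota_\alpha$; I fix one, remove its open disk together with the interior of $f_{G_\alpha}$, and do the same with one face bordering $f_{G_\beta}$. What remains are two closed disks $D_\alpha,D_\beta$, each having the two (ex-)endpoints of the virtual edge on its boundary. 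I glue $D_\alpha$ and $D_\beta$ along their boundary arcs running from the image of $f_{G_\alpha}^-=f_{G_\beta}^-$ to $f_{G_\alpha}^+=f_{G_\beta}^+$, using the orientation matching prescribed by $f$. The result is again a sphere, and the embedded graph in it is exactly $G_\alpha+G_\beta$.

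At every vertex other than the identified pair, the cyclic order of incident edges is unchanged, so the induced combinatorial embedding restricts to $\pi^{G_\alpha}$ on $G_\alpha$ and to $\pi^{G_\beta}$ on $G_\beta$. At each identified vertex, say $u=f_{G_\alpha}^-=f_{G_\beta}^-$, the $G_\alpha$-edges at $u$ form one contiguous arc of the new cyclic order and the $G_\beta$-edges form the complementary arc, since the removed virtual edge is a Jordan curve separating the two bouquets. Explicitly, the new $\pi_u$ is obtained by deleting $f_{G_\alpha}$ from $\pi^{G_\alpha}_u$, deleting $f_{G_\beta}$ from $\pi^{G_\beta}_u$, and splicing one linearization into the other at the vacated slot; the relative direction of the splice is forced by the tail/head matching in $f$. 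Any other planar combinatorial embedding restricting to $\pi^{G_\alpha}$ and $\pi^{G_\beta}$ must also assign to $u$ this consecutive-block structure, and the only freedom remaining is the order of edges inside each parallel bundle (including bundles that are newly created if the amalgamation identifies endpoints of pre-existing parallel edges). This is precisely the uniqueness claimed.

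For the general finite case, I induct on $|E(T)|$. Pick a leaf $\alpha$ with neighbor $\beta$ in $T$. By induction, $\Gamma(T\setminus\{\alpha\})$ has a combinatorial planar embedding, unique modulo parallel bundles, whose restriction to each $G_\gamma$ with $\gamma\neq\alpha$ is $\pi^{G_\gamma}$; in this embedding the edge $f_{G_\beta}(\alpha,\beta)$ is present, since its amalgamation partner was removed. Applying the single-edge case to $\Gamma(T\setminus\{\alpha\})$ and $G_\alpha$ along $f(\alpha,\beta)$ produces the desired embedding of $\Gamma(T)$, with restrictions as prescribed. Independence from the order in which the edges of $T$ are processed follows because each amalgamation only modifies the cyclic orders at the two endpoints of the virtual edge being resolved, and distinct edges of $T$ correspond to disjoint pairs of (vertex, virtual-edge slot); hence commuting the processing order permutes the same local splicings. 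The main obstacle is making the splicing at identified vertices precise and verifying that planarity forces the consecutive-block structure at $u$; once this is done, everything else is bookkeeping carried through by the induction.
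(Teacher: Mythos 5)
Your proposal is correct and takes essentially the same route as the paper: the paper's proof simply writes down the spliced cyclic permutation at each of the two merged vertices (delete $f_A$ and $f_B$ and insert each block into the other's vacated slot) and handles a general finite $T$ by induction over its edges. Your topological cut-and-paste of the two disks is exactly the justification the paper leaves implicit (cf.\ its Figure on amalgamation), and your leaf-removal induction together with the disjoint-slots commutation argument matches the paper's ``follows by induction.''
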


\begin{proof}
Denote $A=G_\alpha$ and $B=G_\beta$, and let $\pi^A$ and $\pi^B$ be the respective combinatorial embeddings. Let $f_A$, $f_B$, $f_A^-$, $f_A^+$, $f_B^-$ and $f_B^+$ be as in the definition of the edge-amalgam tree. Let $v^-$ be the vertex that $f_A^-$ and $f_B^-$ is merged into, after the amalgamation. By symmetry, it suffices to define $\pi^{A+B}_{v^-}$.
Suppose that the cyclic permutation $\pi^A_{f_A^-}$ is $(f_A, e_1,\ldots, e_k)$, and the cyclic permutation $\pi^B_{f_B^-}$ is $(f_B, f_1,\ldots, f_\ell)$. 
Then define $\pi^{A+B}_{v^-}$ as the permutation $(e_1,\ldots, e_k, f_1,\ldots, f_\ell)$. 

The second part follows by induction.
\qed
\medskip
\end{proof}

\begin{proofof}{Theorem \ref{embedding}}
First suppose that $G$ is 2-connected.

Consider the unique 3-block tree $T$ that corresponds to $G$, as in Theorem \ref{DSS}. 
Given some $T'\subset T$, denote by ${\rm virt}(T')$ the set of virtual edges in $\cup_{\alpha\in V(T')}E(G_\alpha)$.
For every $\alpha\in V(T)$ fix a combinatorial embedding $\pi^{G_\alpha}$ 
of $G_\alpha$ in the sphere, as follows. If $G_\alpha$ is a multilink, take a uniform cyclic permutation of its edges; if $G_\alpha$ is a 3-connected graph, take uniformly one of the two combinatorial embeddings (as in Theorem \ref{Whitney}); and if $G_\alpha$ is a cycle, take its unique combinatorial embedding.
Then, by Lemma \ref{edge}, for any finite subtree $T'\subset T$, there exists an embedding of the graph $\Gamma(T')
$ in the sphere such that the orientation that this embedding generates when restricted to $G_\alpha$ is 
$\pi^{G_\alpha}$, for every $\alpha\in V(T')$.
Taking an exhaustion of $T$ by finite subtrees $T'$, this gives rise to an embedding of $G\cup {\rm virt}(T)$ with similar properties. For every vertex of $G$ the limiting permutation for the combinatorial embedding is reached in a finite number of steps, hence the limit does not depend on the particular exhaustion taken.
The resulting combinatorial embedding is unimodular: the only non-deterministic part is the embedding of the 3-blocks at the beginning. When the 3-block is finite, its embedding is trivially unimodular, and when it is infinite 3-connected, then apply Lemma \ref{3conn}.

Suppose now that $G$ is an arbitrary connected graph. For each 2-block $C$ pick a unimodular random combinatorial embedding $\pi^C$. Such an embedding exists, as we have just seen.
Consider an arbitrary cutvertex $v\in V(G)$. Let $C_1,\ldots,C_k$ be a listing of the 2-connected components of $G$ that contain $v$.
Denote by $N_i$ the edges of $C_i$ that are incident to $v$. 

For each $i\in\{1,\ldots,k\}$, pick an $e^i_1\in N_i$ uniformly and independently, and let $e^i_1,e^i_2,\ldots, e^i_{|N_i|}$ be the listing 
of the elements of $N_i$ in the order given by $\pi^{C_i}$. Take a uniform cyclic permutation $\delta$ of $\{1,\ldots,k\}$. Define the cyclic ordering $e^1_1,e^1_2,\ldots, e^1_{|N_1|},e^{\delta(1)}_1,e^{\delta(1)}_2,\ldots, e^{\delta(1)}_{|N_{\delta(1)}|},\ldots, e^{\delta^{(k-1)}(1)}_1,e^{\delta^{(k-1)}(1)}_2,\ldots, e^{\delta^{(k-1)}(1)}_{|N_{\delta^{(k-1)}(1)}|}$ on the edges incident to $v$, and call this ordering $\sigma_v$. Because of the tree-like structure that cutvertices define on a graph $G$, the permutations $(\sigma_v)_{v\in V(G)}$ define a combinatorial embedding of $G$ into the plane. For similar reasons as in the 2-connected case, the resulting combinatorial embedding is unimodular.
\qed
\medskip
\end{proofof}


\ \\
\ \\
\noindent
{\'Ad\'am Tim\'ar}\\
University of Iceland
and\\
Alfr\'ed R\'enyi Institute of Mathematics\\
\texttt{madaramit[at]gmail.com}


\begin{thebibliography}{AAA}


\bibitem{AL}
D. Aldous and R. Lyons (2007) 
Processes on unimodular random networks.
Electron. J. Probab. {\bf 12}, 1454--1508.


\bibitem{AHNR}
O. Angel, T. Hutchcroft, A. Nachmias and G.Ray (2018)
Hyperbolic and parabolic unimodular random maps. 
Geom. Funct. Anal. 28: 879--942.

\bibitem{BLS}
I. Benjamini, R. Lyons and O. Schramm (2015) Unimodular random trees. Ergod. Th. Dynam. Sys., 35, 359--373.


\bibitem{BT}
I. Benjamini and \'A. Tim\'ar (2019)  Invariant embeddings of unimodular random planar graphs. arXiv:1910.01614

\bibitem{BR}
I. Biringer and J. Raimbault (2017) Ends of unimodular random manifolds. Proc. Amer. Math. Soc. 145, 4021--4029.

\bibitem{B}
L. Bowen (2003) Periodicity and circle packings of the nyperbolic plane. Geometriae Dedicata 102, 213--236.

\bibitem{BL}
T. Budzinski and B. Louf (2021) Local limits of uniform triangulations in high genus. Invent. Math. 223, 1–47.


\bibitem{CGMT} C. T. Conley, D. Gaboriau, A. S. Marks and R. D. Tucker-Drob. One-ended spanning subforest and treeability of groups (preprint).


\bibitem{C} N. Curien (2016) Planar stochastic hyperbolic triangulations. Probability Theory and Related
Fields, 165(3):509–540.



\bibitem{DSS}
C. Droms, B. Servatius and H. Servatius (1995) The structure of locally finite two-connected graphs.
Electr. J. of Combin. 2.



\bibitem{E1}
G. Elek (2010) On the limit of large girth graph sequences. Combinatorica, 30(5):553--563.

\bibitem{E2}
G. Elek (2015) Full groups and soficity.
Proc. of AMS, Vol. 143, No. 5, p. 1943--1950.

\bibitem{EL}
G. Elek and G. Lippner (2010) Sofic equivalence relations. J. Funct. Anal., 258(5): 1692--1708, 2010.



\bibitem{HT}
J. E. Hopcroft and R. E. Tarjan (1972) Finding the triconnected components of a graph, Tech. Rep., Dept. of Computer Science, Cornell University, Ithaca, New York, 72--140.

\bibitem{I}
W. Imrich (1975) On Whitney’s theorem on the unique embeddability of 3-connected planar graphs. In Fiedler, M., editor,
Recent Advances in Graph Theory (Proc. Second Czechoslovak Sympos., Prague, 1974), 303–306.



\bibitem{N}
A. Nachmias (2018) {\it Planar Maps, Random Walks and Circle Packing},
\'Ecole d'\'Et\'e de Probabilit\'es de Saint-Flour XLVIII - 2018.












\bibitem{P}
V.G. Pestov (2008) Hyperlinear and Sofic Groups: A Brief Guide, Bulletin of Symbolic Logic, 14, Issue 04, 449-480.



\bibitem{Th}
C. Thomassen (1980) Planarity and duality of finite and infinite graphs, Journal of Combinatorial Theory, Series B 29, no. 2, 244-271. 

\bibitem{Tarxiv} 
\'A. Tim\'ar, Unimodular random planar graphs are sofic. Preprint (2019).
{\tt arXiv:1910.01307v1}

\bibitem{TT}
\'A. Tim\'ar and L. T\'oth, 	
A full characterization of invariant embeddability of unimodular planar graphs. Preprint (2021). 
{\tt arXiv:2101.12709}

\bibitem{T}
W. T. Tutte (1966) {\it Connectivity in Graphs}, University of Toronto Press.

\end{thebibliography}
\end{document}